\theoremstyle{plain}
\newtheorem{theorem}{Theorem}
\newtheorem{cor}[theorem]{Corollary}
\newtheorem{conj}[theorem]{Conjecture}
\newtheorem{lemma}[theorem]{Lemma}
\theoremstyle{definition}
\newtheorem{defn}[theorem]{Definition}
\newtheorem{question}[theorem]{Question}
\newtheorem*{ex*}{Example}
\DeclareMathOperator{\Pic}{Pic}
\DeclareMathOperator{\Aut}{Aut}
\DeclarePairedDelimiterX\set[1]\lbrace\rbrace{#1}
\def\D{\mathbf{D}}
\def\Aut{{\rm Aut}}
\def\Pic{{\rm Pic}}
\def\OO{{\mathcal O}}
\def\ZZ{{\mathbf Z}}
\def\E{\mathcal{E}}
\def\R{\mathbf{R}}
\def\id{{\rm id}}
\def\F{\mathcal{F}}
\def\NN{{\mathbf N}}
\def\I{\mathcal{I}}
\newenvironment{dedication}
  {\thispagestyle{empty}
   \vspace{0.5em}
   \itshape             
        \centering     
  }
  {\par 
   \vspace{1em} 
        
  }
\title[Derived equivalence and non-vanishing loci II]
{Derived equivalence and non-vanishing loci II}
\author[L.~Lombardi]{Luigi Lombardi}
\address{Department of Mathematics, Stony Brook University, Stony Brook, NY 11794-3651}
\email{luigi.lombardi@stonybrook.edu}
\author[M.~Popa]{Mihnea Popa }
\address{Department of Mathematics, Northwestern University,
2033 Sheridan Road, Evanston, IL 60208, USA} 
\email{mpopa@math.northwestern.edu}
\begin{document}

 \maketitle

\begin{dedication}
Dedicated to Rob Lazarsfeld on the occasion of his sixtieth birthday, with warmth and gratitude.
\end{dedication}

\begin{abstract}
We prove a few cases of a conjecture on the invariance of cohomological support loci under derived equivalence
by establishing a concrete connection with the related problem of the invariance of Hodge numbers. We use the 
main case in order to study the derived behavior of fibrations over curves.
\end{abstract}

\section{Introduction}

This paper is concerned with the following conjecture made in \cite{popa} on the behavior of the
non-vanishing loci for the cohomology of deformations of the canonical bundle under derived equivalence.  
We recall that given a smooth projective $X$ these loci, more commonly called cohomological support loci, 
are the closed algebraic subsets of the Picard variety defined as
$$V^i( \omega_X) : = \{ \alpha ~|~ H^i (X, \omega_X \otimes \alpha) \neq 0\} 
\subseteq {\rm Pic}^0 (X).$$
All varieties we consider are defined over the complex numbers.
We denote by $\D(X)$ the bounded derived category of coherent sheaves 
$\D^{{\rm b}} ({\rm Coh}(X))$.  

\begin{conj}[\cite{popa}]\label{var1}
Let $X$ and $Y$ be smooth projective varieties with $\D(X) \simeq \D(Y)$ as triangulated categories. Then 
$$V^i (\omega_X)_0  \simeq V^i (\omega_Y)_0 \,\,\,\, {\rm for ~all~} i \ge 0,$$ 
where $V^i (\omega_X)_0$ denotes the union of the irreducible components of 
$V^i (\omega_X)$ passing through the origin, and similarly for $Y$.
\end{conj}

We refer to \cite{popa} and \cite{lombardi} for a general discussion of 
this conjecture and its applications, and of the cases in which it has been known to hold (recovered below as well).
The main point of this paper is to directly relate Conjecture \ref{var1} 
to part of the well-known problem of the invariance of Hodge numbers under derived equivalence; we state only the special case we need.

\begin{conj}\label{special_hodge}
Let $X$ and $Y$ be smooth projective varieties with 
$\D(X) \simeq \D(Y)$. Then 
$$h^{0,i} (X) = h^{0,i} (Y) \,\,\,\, {\rm for ~all~} i \ge 0.$$ 
\end{conj}

Our main result is the following:

\begin{theorem}\label{main}
Conjecture \ref{special_hodge} implies Conjecture \ref{var1}. 
More precisely, Conjecture \ref{var1} for a given $i$ is implied by Conjecture \ref{special_hodge} for $n-i$, where
$n = {\rm dim}~X$.
\end{theorem}

This leads to a verification of Conjecture \ref{var1} in a few important cases, corresponding 
to the values of $i$ for which Conjecture \ref{special_hodge} is already known to hold.

\begin{cor}\label{non-vanishing} 
Let $X$ and $Y$ be smooth projective varieties of dimension $n$, with $\D(X) \simeq \D(Y)$. Then
$$V^i (\omega_X)_0 \simeq V^i (\omega_Y)_0\,\,\,\,{\rm for}\,\,\,\, i = 0, 1, n-1, n.$$
\end{cor}
\begin{proof}
According to Theorem \ref{main}, we need to know that derived equivalence implies the 
invariance of $h^{0,n}$, $h^{0, n-1}$ and $h^{0,1}$. The first two are well-known consequences of the invariance of Hochschild homology, while
the last is the main result of \cite{PS}.
\end{proof}

A stronger result than Theorem \ref{main} and Corollary \ref{non-vanishing}, involving the dimension of cohomology groups 
related via the isomorphism, is in fact proved in \S\ref{sec comparison} (see Conjecture \ref{stronger} 
and Theorem \ref{main_improved}). For $i = 0,1$ this was proved in \cite{lombardi} by means of a 
twisted version of Hochschild homology. We also note that the Corollary 
above recovers a result first proved in \cite[\S4]{lombardi}, namely that Conjecture \ref{var1} holds for varieties of dimension up to $3$. 
We can also conclude that it holds for an important class of irregular fourfolds.

\begin{cor}
Conjecture \ref{var1} holds in dimension up to three, and for fourfolds of maximal Albanese dimension.
\end{cor} 
\begin{proof}
The first part follows immediately from Corollary \ref{non-vanishing}.
For the second, 
according to Corollary \ref{non-vanishing} and Theorem \ref{main} it suffices to have $h^{0,2} (X) = h^{0,2} (Y)$, 
which is proved for derived equivalent fourfolds of maximal Albanese dimension in \cite[Corollary 1.8]{lombardi}.\footnote{Note that the 
same holds for fourfolds of non-negative Kodaira dimension whose Albanese image has dimension $3$, and for those with non-affine $\Aut^0 (X)$.}
\end{proof}

Overall, besides the unified approach, from the point of view of Conjecture \ref{var1} the key new result and applications here are in the 
case $i = n-1$. By theorems of Beauville \cite{beauville} and Green-Lazarsfeld \cite{gl}, the cohomological support loci 
$V^{n-1} (\omega_X)$ are the most ``geometric" among the $V^i$, corresponding in a quite precise way to fibrations of $X$ over curves. 
This leads to the following structural application; note that while Fourier-Mukai equivalences between smooth projective surfaces are 
completely classified (\cite{BM2}, \cite{kawamata}), in higher dimension few results towards classification are available (see e.g. \cite{toda}).

\begin{theorem}\label{pencils}
Let $X$ and $Y$ be smooth projective varieties with $\D(X) \simeq \D(Y)$, such that 
$X$ admits a surjective morphism to a smooth projective curve $C$ of genus $g \ge 2$. Then:

\noindent
(i) $Y$ admits a surjective morphism to a curve of genus $\ge g$. 

\noindent
(ii) If $X$ has a Fano fibration structure over $C$, then so does $Y$, and $X$ and $Y$
are $K$-equivalent.\footnote{Recall that this means that there exist a smooth projective $Z$ and birational morphisms
$f\colon Z \rightarrow X$ and $g\colon Z \rightarrow Y$ such that $f^* \omega_X \simeq g^* \omega_Y$.} In particular, if $X$ is a Mori fiber space 
over $C$, then $X$ and $Y$ are isomorphic.
\end{theorem}

A slightly stronger statement is given in Theorem \ref{fano}.
We remark that it is known from results of Beauville and Siu that $X$ admits a surjective morphism to a curve of 
genus $\ge g$ if and only if $\pi_1 (X)$ has a surjective homomorphism onto $\Gamma_g$, the fundamental group of a 
Riemann surface of genus $g$ (see the Appendix to \cite{catanese}). 
On the other hand, it is also known that derived equivalent varieties do not necessarily have isomorphic fundamental 
groups (see \cite{bak}, \cite{schnell}), 
so this would not suffice in order to deduce  Theorem \ref{pencils} (i). A more precise version of (i) can be found 
in the Remark on p. 302; see also Question \ref{curves}. 
The refinement we give in (ii) in the case of Fano fibrations answers a question posed to us by Y. Kawamata; for this, 
the method of proof is completely independent of the study of $V^i (\omega_X)$, relying instead of Kawamata's kernel 
technique \cite{kawamata} and on the structure of the Albanese map for varieties with nef anticanonical bundle \cite{zhang}. 
The result however fits naturally in the present context.

Going back to the main results, the isomorphism between the $V^i_0$ is realized, as in \cite{lombardi}, via the Rouquier isomorphism 
associated to a Fourier-Mukai equivalence (see \S\ref{sec cyclic}). To relate this to the behavior of 
Hodge numbers of type $h^{0,i}$ as in Theorem \ref{main}, the main new ingredients are Simpson's result 
describing the components of all $V^i (\omega_X)$ as torsion translates of abelian subvarieties of $\Pic^0 (X)$, used via a density 
argument involving torsion points of special prime order, and the comparison of the derived categories of cyclic covers 
associated to torsion line bundles mapped to each other via the Rouquier isomorphism, modeled after and slightly extending 
results of Bridgeland-Maciocia \cite{BM} on equivalences of canonical covers.

\section{Derived equivalences of cyclic covers}\label{sec cyclic}

\noindent
{\bf Cyclic covers.}
Let $X$ be a complex smooth projective variety and $\alpha$ be a $d$-torsion element of $\Pic^0 (X)$. 
We denote by $$\pi_{\alpha} \colon X_{\alpha}\rightarrow X$$ the \'{e}tale cyclic cover of order $d$ associated to 
$\alpha$ (see e.g. \cite[\S7.3]{huybrechts}). Then 
\begin{equation}\label{projection}
\pi_{\alpha *}\OO_{X_{\alpha}}\simeq \bigoplus_{i=0}^{d-1}\alpha^{-i}
\end{equation}
and there is a free action of the group $G:=\ZZ / d\ZZ$ on $X_{\alpha}$ such that $X_{\alpha}/G\simeq X$.
The following Lemma is analogous to \cite[Proposition 2.5(b)]{BM}. We include a proof for completeness,
entirely inspired by the approach in \cite[Proposition 2.5(a)]{BM}.

\begin{lemma}\label{ob}
Let $E$ be an object of $\D(X)$. There is an object 
$E_{\alpha}$ in $\D(X_{\alpha})$ such that $\pi_{\alpha*} E_{\alpha} \simeq E$ if and only if
$E\otimes \alpha\simeq E$.
\end{lemma}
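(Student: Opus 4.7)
The forward implication is immediate from the projection formula and the defining property $\pi_\alpha^*\alpha \simeq \OO_{X_\alpha}$ of the cyclic cover: if $E \simeq \pi_{\alpha*}E_\alpha$, then
\[
E \otimes \alpha \;\simeq\; \pi_{\alpha*}\bigl(E_\alpha \otimes \pi_\alpha^*\alpha\bigr) \;\simeq\; \pi_{\alpha*}E_\alpha \;\simeq\; E.
\]
The content of the lemma is in the converse.

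Assume now that $\phi : E \otimes \alpha \to E$ is an isomorphism. The plan is to realize $E_\alpha$ as a direct summand of $\pi_\alpha^* E$ cut out by an idempotent built from $\phi$. On one hand, projection formula and (\ref{projection}) give
\[
\pi_{\alpha*}\pi_\alpha^* E \;\simeq\; \bigoplus_{i=0}^{d-1} E \otimes \alpha^{-i} \;\simeq\; E^{\oplus d},
\]
the second step using iterated powers of $\phi$; so $\pi_\alpha^* E$ ought to split off $d$ summands, each pushing forward to one copy of $E$. On the other hand, pulling $\phi$ back along $\pi_\alpha$ and using the natural trivialization $\pi_\alpha^*\alpha \simeq \OO_{X_\alpha}$ produces an automorphism $\psi \in \Aut_{\D(X_\alpha)}(\pi_\alpha^* E)$. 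After rescaling $\phi$ so that $\psi^d = \id$, the spectral projectors
\[
p_\zeta \;:=\; \frac{1}{d}\sum_{k=0}^{d-1}\zeta^{-k}\psi^k, \qquad \zeta^d = 1,
\]
are orthogonal idempotents summing to $\id$, decomposing $\pi_\alpha^* E$ into $d$ eigenspaces (Karoubian completeness of $\D(X_\alpha)$ lets us split off each one). Taking $E_\alpha$ to be any such eigenspace and comparing with the decomposition of $\pi_{\alpha*}\pi_\alpha^* E$ above yields $\pi_{\alpha*}E_\alpha \simeq E$.

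The principal technical obstacle is the normalization $\psi^d = \id$: since $\operatorname{End}(E)$ can be large, $\phi^d$ need not be a scalar, so a naive rescaling of $\phi$ by a constant will not suffice. The Bridgeland--Maciocia proof \cite{BM}, which the authors explicitly imitate, circumvents this by exploiting the Galois $G = \ZZ/d\ZZ$-action on $X_\alpha$: the right idempotent is extracted from the interaction between $\pi_\alpha^*\phi$ and this action, without demanding that $\phi^d$ be a global scalar. This is where I expect the fine points of the proof to concentrate.
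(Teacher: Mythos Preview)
Your forward implication is fine and matches the paper. For the converse, you have explicitly left a gap: the normalization $\psi^d=\id$ is never achieved, and as you yourself note, a scalar rescaling of $\phi$ cannot repair this when $\operatorname{End}(E)$ is large. Without that normalization the $p_\zeta$ are not idempotent and the eigenspace decomposition of $\pi_\alpha^*E$ does not exist. Even granting such a decomposition, the claim that each summand pushes forward to a single copy of $E$ would still need work: you must match the eigenspace splitting of $\pi_\alpha^*E$ against the splitting of $\pi_{\alpha*}\pi_\alpha^*E$ coming from~(\ref{projection}), and these two splittings arise from different data.

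Your guess that the paper closes the gap by extracting an idempotent from the Galois action is incorrect: the argument uses no idempotents and no Karoubian completeness at all. Following \cite{BM} Proposition~2.5(a), the paper proceeds by induction on the number of nonzero cohomology sheaves of $E$. The base case is the classical fact that $\pi_{\alpha*}$ identifies $\operatorname{Coh}(X_\alpha)$ with modules over the $\OO_X$-algebra $\bigoplus_{i=0}^{d-1}\alpha^i$, so a sheaf $E$ with $E\otimes\alpha\simeq E$ is already such a module. For the inductive step one uses the truncation triangle $E\to\mathcal{H}^0(E)\to F\to E[1]$: the terms $\mathcal{H}^0(E)$ and $F$ lift by the base case and the inductive hypothesis, and the real work is to lift the connecting morphism $f:\mathcal{H}^0(E)\to F$ to a morphism upstairs. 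It is only here that an averaging over $G=\ZZ/d\ZZ$ appears --- not to build an idempotent on $\pi_\alpha^*E$, but to replace a chain-level representative of $f$ by a $G$-invariant one, which then visibly comes from $X_\alpha$. So the mechanism is d\'evissage by cohomological amplitude together with an averaging trick for morphisms, not a spectral decomposition of $\pi_\alpha^*E$.
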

\begin{proof} 
For the nontrivial implication, let 
$$s \colon E\stackrel{\simeq}{\longrightarrow} E\otimes \alpha$$ 
be an isomorphism.
We proceed by induction on the number $r$ of non-zero cohomology sheaves of $E$.
If $E$ is a sheaf concentrated in degree zero, then the Lemma is a standard fact. Indeed, it is well known that 
$$\pi_{\alpha*} \colon {\rm Coh}(X_{\alpha}) \rightarrow {\rm Coh}(\mathcal{A})$$ 
is an equivalence between the category of coherent $\OO_{X_{\alpha}}$-modules 
and the category of coherent 
$\mathcal{A} : = (\bigoplus_{i=0}^{d-1} \alpha^{i})$-algebras, while
a coherent sheaf $E$ on $X$ belongs to ${\rm Coh}(\mathcal{A})$ 
if and only if $E\otimes \alpha\simeq E$.

Suppose now that the Lemma is true for all objects having at most $r-1$ non-zero cohomology sheaves, 
and consider an object $E$ with $r$ non-zero cohomology sheaves. By shifting $E$, we can assume that 
$\mathcal{H}^i(E) = 0$ for $i\notin ~[-(r-~1),0]$. Since $E\otimes \alpha\simeq E$, 
we also have $\mathcal{H}^0(E)\otimes \alpha\simeq \mathcal{H}^0(E)$. 
Therefore, by the above, there exists a coherent sheaf 
$M_{\alpha}$ on $X_{\alpha}$ such that $\pi_{\alpha*}M_{\alpha}\simeq \mathcal{H}^0(E)$.
Now the natural morphism $E\stackrel{j}{\rightarrow} \mathcal{H}^0(E)$ induces a distinguished triangle 
$$
E\stackrel{j}{\rightarrow} \mathcal{H}^0(E)\stackrel{f}{\rightarrow} F\rightarrow E[1]
$$
such that the object $F$ has $r-1$ non-zero cohomology sheaves.
By the commutativity of the following diagram 

\centerline{ \xymatrix@=32pt{
 & E \ar^{j}[r]\ar[d] & \mathcal{H}^0(E)\ar[d]\ar[r]^{f} & F\ar[r]  & E[1] \ar[d]\\
&  E\otimes \alpha \ar^{j\otimes \alpha}[r] & \mathcal{H}^0(E)\otimes \alpha  \ar[r]^{f\otimes \alpha} 
& F\otimes \alpha \ar[r] & (E\otimes \alpha)[1],  & \\ }}
\noindent we obtain an isomorphism $F\simeq F\otimes \alpha$, and therefore by induction 
an object $F_{\alpha}$ in $\D(X_{\alpha})$ such that $\pi_{\alpha*} F_{\alpha}\simeq F$. 

To show the existence of an object $E_{\alpha}$ in $\D(X_{\alpha})$ such that $\pi_{\alpha*}E_{\alpha}\simeq E$, 
we assume for a moment that there exists a morphism $f_{\alpha}:M_{\alpha}\rightarrow F_{\alpha}$ such that $\pi_{\alpha*} f_{\alpha} =f$.
This is enough to conclude, since by completing $f_{\alpha}$ to a distinguished triangle
$$M_{\alpha}\stackrel{f_{\alpha}}{\rightarrow}F_{\alpha}\rightarrow E_{\alpha}[1]\rightarrow M_{\alpha}[1],$$
and applying $\pi_{\alpha*}$, we obtain $\pi_{\alpha*}E_{\alpha}\simeq E$.

We are left with showing the existence of $f_{\alpha}$. 
Let $\lambda_{\alpha}:\pi_{\alpha*}M_{\alpha} \rightarrow \pi_{\alpha*}M_{\alpha}\otimes ~\alpha$ and
$\mu_{\alpha}:\pi_{\alpha*}F_{\alpha} \rightarrow \pi_{\alpha*}F_{\alpha}\otimes \alpha$ 
be the isomorphisms determined by the diagram above. Note that 
\begin{equation}\label{eq}
\mu_{\alpha}\circ f= (f\otimes \alpha)\circ \lambda_{\alpha} \quad \mbox{ in }\quad \D(X).
\end{equation}
We can replace $F_\alpha$ by an injective resolution
$$\cdots \, \rightarrow \, \I_{\alpha}^{-1} \, \stackrel{d^{-1}}{\rightarrow} \, \I_{\alpha}^{0} \, \stackrel{d^0}{\rightarrow} \, \I_{\alpha}^1 \, \stackrel{d^1}
{\rightarrow} \, \cdots,$$
so that $f$ is represented (up to homotopy) by a morphism of $\OO_X$-modules
$$u:\pi_{\alpha*}M_{\alpha}\rightarrow \pi_{\alpha*}\I_{\alpha}^0.$$
Let $V$ be the image of the map
$${\rm Hom}(\pi_{\alpha*}M_{\alpha},-) \colon {\rm Hom}(\pi_{\alpha*}M_{\alpha},\pi_{\alpha*}\I_{\alpha}^{-1})\rightarrow 
{\rm Hom}(\pi_{\alpha*}M_{\alpha},\pi_{\alpha*}\I_{\alpha}^0).$$ 
By \eqref{eq}, we have isomorphisms of $\OO_X$-modules $a_1 \colon \pi_{\alpha*}M_{\alpha} \rightarrow 
\pi_{\alpha*}M_{\alpha}\otimes \alpha$ and 
$b_1\colon \pi_{\alpha*}\I_{\alpha}^0 \rightarrow \pi_{\alpha*}\I_{\alpha}^0\otimes \alpha$ such that 
\begin{equation}\label{equ}
 b_1\circ u = (u\otimes \alpha)\circ a_1 \qquad \mbox{up to homotopy.}
 \end{equation}
By setting $a_{i}:=(a_1\otimes \alpha^{i-1})\circ \cdots \circ (a_1 \otimes \alpha)\circ a_1$ (for $i\geq 2$) 
and similarly for $b_i$, we
define an action of $G:=\ZZ /d\ZZ$ on $V$ as
$$g^i\cdot (-) \, :=  \, b^{-1}_{i}\circ (-\otimes \alpha^{ i})\circ a_i,$$ 
where $g$ is a generator of $G$.
Moreover, we define operators $A$ and $B$ on $V$ as
$$A \, := \, \sum_{i=0}^{d-1}g^i\cdot(-),\quad \quad B:=1-g\cdot (-).$$
Since $AB=0$, we note that ${\rm Ker}\,A={\rm Im}\,B$.

By \eqref{equ}, we have that $B(u)=u-b_1^{-1}\circ (u\otimes \alpha)\circ a_1$ is null-homotopic, and therefore
$B(u)\in V$. Since ${\rm Ker}\,A={\rm Im}\,B$, there exists a morphism $\eta\in V$ such that $B(\eta)=B(u)$.
Consider the morphism ~$t \, := \, u-\eta\in{\rm Hom}(\pi_{\alpha*}M_{\alpha},\pi_{\alpha*}\I_{\alpha}^0)$. It is easy to check that $t$ is 
homotopic to $u$ and therefore it represents $f$ as well.
But now $B(t)=0$, so $t = \pi_{\alpha*}(v)$ for some morphism $v \colon M_{\alpha}\rightarrow 
\I_{\alpha}^0$, which concludes the proof.
\end{proof}

\noindent
{\bf Rouquier isomorphism.}
It is well known by Orlov's criterion that every equivalence $\Phi\colon \D(X) \rightarrow \D(Y)$ is of Fourier-Mukai type, 
i.e. induced by an object $\E \in  \D(X\times Y)$, unique up to isomorphism, via
$$\Phi = \Phi_{\E} \colon \D(X) \rightarrow \D(Y), \,\,\,\, \Phi_{\E} (-) = \R{p_Y}_* \big(p_X^* (-) \stackrel{\mathbf{L}}{\otimes} \E\big).$$
For every such equivalence, Rouquier \cite[Th\'eor\`eme 4.18]{rouquier} showed that there is an induced isomorphism of algebraic groups
$$F_{\E} \colon {\rm Aut}^0 (X) \times {\rm Pic}^0 (X) \rightarrow {\rm Aut}^0 (Y) \times 
{\rm Pic}^0 (Y)$$
which usually mixes the two factors. A concrete formula for $F_{\E}$ was worked out in \cite[Lemma 3.1]{PS}, namely
\begin{equation}\label{rouquier_formula}
F_{\E} (\varphi, \alpha) = (\psi, \beta) \iff p_X^{\ast} \alpha \otimes (\varphi\times \id_Y)^{\ast} \E 
\simeq p_Y^{\ast} \beta \otimes (\id_X \times \psi)_{\ast} \E.
\end{equation}

\noindent
{\bf Derived equivalences of cyclic covers.}
Before stating the main theorem of this section, we recall two definitions from \cite{BM} (see also \cite[\S7.3]{huybrechts}).
Let $\widetilde{X}$ and $\widetilde{Y}$ be two smooth projective varieties on which the group 
$G:=\ZZ/d\ZZ$ acts freely. Denote by
$\pi_X \colon  \widetilde{X}\rightarrow X$ and $\pi_Y \colon \widetilde{Y}\rightarrow Y$ the quotient maps of $\widetilde{X}$ and $\widetilde{Y}$ 
respectively.

\begin{defn}\label{equiv}
A functor $\widetilde{\Phi} \colon \D(\widetilde{X})\rightarrow \D(\widetilde{Y})$ is \emph{equivariant} if 
there exist an automorphism $\mu$ of $G$ and isomorphisms of functors
$$g^*\circ \widetilde{\Phi}\simeq \widetilde{\Phi}\circ \mu(g)^*\quad \mbox{ for all}\quad g\in G.$$ 
\end{defn}

\begin{defn}\label{lift}
 Let $\Phi \colon \D(X)\rightarrow \D(Y)$ be a functor. A \emph{lift} of $\Phi$ is a functor 
$\widetilde{\Phi} \colon \D(\widetilde{X})\rightarrow \D(\widetilde{Y})$ inducing isomorphisms
\begin{equation}\label{comm1}
 \pi_{Y*} \, \circ \, \widetilde{\Phi} \, \simeq \, \Phi \, \circ  \, \pi_{X*}
\end{equation}
\begin{equation}\label{comm2}
\pi^*_Y \, \circ \, \Phi \, \simeq \, \widetilde{\Phi} \, \circ \, \pi_X^*.
\end{equation}
\end{defn}

\emph{Remark.} 
If $\Phi \colon \D(X)\rightarrow \D(Y)$ and $\widetilde{\Phi}  \colon  \D(\widetilde{X})\rightarrow \D(\widetilde{Y})$ 
are equivalences, then by taking the adjoints \eqref{comm1} holds if and only if \eqref{comm2} holds.

Now we are ready to prove the main result of this section. It is a slight extension of the result of 
\cite{BM} on canonical covers, whose proof almost entirely follows the one given there, and which serves as a 
technical tool for our main theorem.

\begin{theorem}\label{cyclic}
Let $X$ and $Y$ be smooth projective varieties, and $\alpha\in\Pic^0 (X)$ 
and 
$\beta\in\Pic^0 (Y)$ $d$-torsion elements. 
Denote by $\pi_{\alpha} \colon X_{\alpha}\rightarrow X$ and
$\pi_{\beta} \colon ~Y_{\beta}\rightarrow Y$ the cyclic covers associated to $\alpha$ and $\beta$ respectively.

\noindent
(i) Suppose that $\Phi_{\E}\colon \D(X)\rightarrow \D(Y)$ is an equivalence, and that $F_{\E}(\id_X,\alpha)=(\id_Y,\beta)$. 
Then there exists an equivariant 
equivalence $\Phi_{\widetilde\E} \colon \D(X_{\alpha})\rightarrow \D(Y_{\beta})$ lifting $\Phi_{\E}$.

\noindent
(ii) Suppose that $\Phi_{\widetilde{\F}} \colon \D(X_{\alpha})\rightarrow \D(Y_{\beta})$ is an equivariant equivalence. 
Then $\Phi_{\widetilde{\F}}$ is the lift of some equivalence $\Phi_{\F} \colon \D(X)\rightarrow \D(Y)$.
\end{theorem}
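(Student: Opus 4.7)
Following \cite{BM}, the strategy is to compare kernels on $X\times Y$ with kernels on $X_\alpha\times Y_\beta$ via an intermediate cyclic cover. Using \eqref{projection} and the decomposition $(\pi_\alpha\times\pi_\beta)_*\OO_{X_\alpha\times Y_\beta}=\bigoplus_{i,j}\alpha^{-i}\boxtimes\beta^{-j}$, one checks that, under the diagonal action of $G=\ZZ/d\ZZ$, the quotient $Z:=(X_\alpha\times Y_\beta)/G$ is identified with the cyclic cover $\pi: Z\to X\times Y$ associated to $\alpha^{-1}\boxtimes\beta$. Write $q: X_\alpha\times Y_\beta\to Z$ for the residual étale quotient and $\widetilde{p}_{X_\alpha}: X_\alpha\times Y_\beta\to X_\alpha$, $\widetilde{p}_{Y_\beta}: X_\alpha\times Y_\beta\to Y_\beta$ for the projections. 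The key geometric observation is that the square
$$\xymatrix@=18pt{ X_\alpha\times Y_\beta\ar[r]^-{\widetilde{p}_{X_\alpha}}\ar[d]_-{q} & X_\alpha\ar[d]^-{\pi_\alpha}\\ Z\ar[r]^-{p_X\circ\pi} & X }$$
is cartesian (by comparing the $G$-actions) and analogously for $Y_\beta$.

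For (i), the Rouquier formula \eqref{rouquier_formula} applied to $F_\E(\id_X,\alpha)=(\id_Y,\beta)$ yields $\E\otimes(\alpha^{-1}\boxtimes\beta)\simeq\E$, so Lemma \ref{ob} produces $\widetilde{\E}_Z\in\D(Z)$ with $\pi_*\widetilde{\E}_Z\simeq\E$. Set $\widetilde{\E}:=q^*\widetilde{\E}_Z\in\D(X_\alpha\times Y_\beta)$ and let $\Phi_{\widetilde{\E}}$ be the induced Fourier-Mukai functor. Identity \eqref{comm1} then follows from a direct chase: the projection formula along $q$ reduces $\pi_{\beta*}\Phi_{\widetilde{\E}}(A)$ to $p_{Y*}\pi_*(q_*\widetilde{p}_{X_\alpha}^*A\otimes\widetilde{\E}_Z)$; flat base change on the cartesian square above replaces $q_*\widetilde{p}_{X_\alpha}^*A$ by $\pi^*p_X^*\pi_{\alpha*}A$; and the projection formula along $\pi$ together with $\pi_*\widetilde{\E}_Z\simeq\E$ assemble to produce $\Phi_\E(\pi_{\alpha*}A)$. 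Identity \eqref{comm2} then follows from Remark \ref{adj}. Equivariance of $\Phi_{\widetilde{\E}}$ with $\mu=\id$ reduces to the $G$-invariance of $\widetilde{\E}$ under the diagonal action on $X_\alpha\times Y_\beta$, which is tautological from the construction $\widetilde{\E}=q^*\widetilde{\E}_Z$. To conclude that $\Phi_{\widetilde{\E}}$ is an equivalence, I would apply the same construction to a quasi-inverse of $\Phi_\E$ (whose Rouquier isomorphism sends $(\id_Y,\beta)$ to $(\id_X,\alpha)$, since $F_\E$ is a group homomorphism) and verify that the compositions are isomorphic to the identities using the lift identities.

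For (ii), given an equivariant equivalence $\Phi_{\widetilde{\F}}$ with kernel $\widetilde{\F}$, the equivariance $g^*\circ\Phi_{\widetilde{\F}}\simeq\Phi_{\widetilde{\F}}\circ\mu(g)^*$ translates, via Orlov's uniqueness of Fourier-Mukai kernels, into a $G$-equivariance of $\widetilde{\F}$ under an appropriate diagonal action on $X_\alpha\times Y_\beta$. Galois descent along the $G$-torsor $q$ then yields $\widetilde{\F}_Z\in\D(Z)$ with $q^*\widetilde{\F}_Z\simeq\widetilde{\F}$, and I take $\F:=\pi_*\widetilde{\F}_Z\in\D(X\times Y)$ as the kernel of $\Phi_\F:\D(X)\to\D(Y)$. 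Running the same base change chase as in (i) in reverse verifies that $\Phi_{\widetilde{\F}}$ lifts $\Phi_\F$, and $\Phi_\F$ is an equivalence by the parallel argument applied to the inverse equivariant equivalence.

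The main technical obstacle is the geometric identification of the cyclic cover of $X\times Y$ associated to $\alpha^{-1}\boxtimes\beta$ with the diagonal quotient $(X_\alpha\times Y_\beta)/G$, together with fixing sign conventions so that the $G$-invariants of $(\pi_\alpha\times\pi_\beta)_*\OO$ match \eqref{projection}. A secondary subtlety in (ii) is the precise translation between functor-level equivariance (Definition \ref{equiv}) and kernel-level $G$-equivariance via Orlov's theorem, which must be carried out degree-by-degree to identify the correct diagonal action. Beyond these points, the remainder of the verification is a series of routine base change and projection formula manipulations on squares of étale covers, following the template of \cite{BM}.
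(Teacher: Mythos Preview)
Your construction via the diagonal quotient $Z=(X_\alpha\times Y_\beta)/G$ is a legitimate and somewhat more symmetric variant of the paper's route, which instead factors through $X_\alpha\times Y$: the paper pulls $\E$ back along $\pi_\alpha\times\id_Y$ and then applies Lemma~\ref{ob} along $\id_{X_\alpha}\times\pi_\beta$, whereas you apply Lemma~\ref{ob} directly on $X\times Y$ and then pull back along $q$. Both produce the same $\widetilde{\E}$, and your cartesian-square verification of \eqref{comm1} is correct.

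The genuine gap is in your argument that $\Phi_{\widetilde{\E}}$ is an equivalence. The lift identities, applied to $\Phi_{\widetilde{\E}}$ and to the analogous lift $\Psi_{\widetilde{\E'}}$ of a quasi-inverse, only give
\[
\pi_{\alpha*}\circ\bigl(\Psi_{\widetilde{\E'}}\circ\Phi_{\widetilde{\E}}\bigr)\;\simeq\;\pi_{\alpha*},
\]
and this does \emph{not} force $\Psi_{\widetilde{\E'}}\circ\Phi_{\widetilde{\E}}\simeq\id$. Indeed, any functor of the form $g_*\circ(L\otimes-)$ with $g\in G$ and $L\in\Pic(X_\alpha)$ satisfying $\pi_{\alpha*}L\simeq\pi_{\alpha*}\OO_{X_\alpha}$ will satisfy the same relation. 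The paper (following \cite{BM} Lemma~4.3) shows that the composite must be of this shape, and then kills $L$ by taking left adjoints and evaluating at $\OO_X$; the residual $g_*$ need not be trivial but is still an autoequivalence, so one concludes by composing with $g^*$. Your sentence ``verify that the compositions are isomorphic to the identities using the lift identities'' skips this step and, as written, asserts something false. A related ordering issue: you invoke Remark~\ref{adj} to obtain \eqref{comm2} before knowing $\Phi_{\widetilde{\E}}$ is an equivalence, but Remark~\ref{adj} requires both functors to be equivalences; the paper correctly postpones this to the end.
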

\begin{proof}
To see (i), consider the following commutative diagram, where $p_1$, $p_2$, $r_1$ and $r_2$ are projection maps:

\centerline{ 
\xymatrix@=32pt{
 X_{\alpha} \ar[d]^{\pi_{\alpha}} &  X_{\alpha}\times Y \ar[l]_{r_1 \,\,\,\,}\ar[d]^{\pi_{\alpha}\times \id_Y} \ar[r]^{\,\,\,\,\,\,\,\, r_2} & Y\ar@{=}[d] \\
 X  & \ar[l]_{p_1 \,\,\,\,}  X\times Y  \ar[r]^{\,\,\,\,\,\,\,\, p_2}  & Y.\\ }}
\noindent By (\ref{rouquier_formula}), the condition $F_{\E}(\id_X,\alpha)=(\id_Y,\beta)$ is equivalent to the isomorphism 
in $\D(X\times Y)$:
\begin{equation}\label{kernel}
p_1^*\alpha\otimes \E\simeq p_2^*\beta\otimes \E.
\end{equation}
Pulling \eqref{kernel} back via the map $(\pi_{\alpha}\times \id_Y)$, we get an isomorphism
$$(\pi_{\alpha}\times \id_Y)^*\E\simeq  r_2^*\beta\otimes (\pi_{\alpha}\times \id_Y)^*\E$$ as $\pi_{\alpha}^*\alpha\simeq \OO_{X_{\alpha}}$.
As the map $(\id_{X_{\alpha}}\times \pi_{\beta}) \colon X_{\alpha}\times Y_{\beta}\rightarrow X_{\alpha}\times Y$ is the 
\'{e}tale cyclic cover associated to the line bundle $r_2^*\beta$,
by Lemma \ref{ob} there exists an object $\widetilde{\E}$ such that 
\begin{equation*}
(\id_{X_{\alpha}}\times \pi_{\beta})_*\widetilde{\E}\simeq (\pi_{\alpha}\times \id_Y)^*\E.
\end{equation*}
By \cite{BM} Lemma 4.4, there is an isomorphism
\begin{equation}\label{iso1}
\pi_{\beta*} \, \circ \, \Phi_{\widetilde{\E}} \, \simeq \, \Phi_{\E} \, \circ \, \pi_{\alpha*}.
\end{equation}

We now show that $\Phi_{\widetilde{\E}}$ is an equivalence.
Let $\Psi_{\E'} \colon \D(Y)\rightarrow \D(X)$ be a quasi-inverse of $\Phi_{\E}$. Since $F_{\E'}=F^{-1}_{\E}$, we have that
$F_{\E'}(\id_X,\beta)=(\id_Y,\alpha)$.
By repeating the previous argument, one then sees that there exists an object $\widetilde{\E'}$ such that
$$(\pi_{\alpha} \times \id_{Y_{\beta}})_*\widetilde{\E'} \simeq (\id_X \times \pi_{\beta})^*\E' $$ 
and an isomorphism of functors
\begin{equation}\label{iso2}
\pi_{\alpha*} \, \circ \, \Psi_{\widetilde{\E'}} \, \simeq \, \Psi_{\E'} \, \circ \, \pi_{\beta*}.
\end{equation}
Since $\Psi_{\E'}\circ \Phi_{\E}\simeq \id_{\D(X)}$, using \eqref{iso1} and \eqref{iso2} we get an isomorphism
\begin{equation}\label{iso3}
\pi_{\alpha*} \, \circ \, \Psi_{\widetilde{\E'}} \, \circ  \, \Phi_{\widetilde{\E}} \, \simeq \, \Psi_{\E'} \, \circ \, \pi_{\beta*} \, \circ \, \Phi_{\widetilde{\E}} \, 
\simeq \, \Psi_{\E'} \, \circ \, \Phi_{\E} \, \circ \, \pi_{\alpha*} \, \simeq \, \pi_{\alpha*}.
\end{equation}
Hence, following the proof of \cite[Lemma 4.3]{BM}, we have that $\Psi_{\widetilde{\E'}}\circ \Phi_{\widetilde{\E}}\simeq g_*(L\otimes -)$ 
for some $g\in G$ and $L \in {\rm Pic} (X_\alpha)$. 
By taking left adjoints in \eqref{iso3}, we obtain on the other hand that 
$$(L^{-1} \otimes -)\circ g^* \circ \pi_{\alpha}^*\simeq 
\pi_{\alpha}^*,$$ 
which applied to $\OO_X$ yields $L\simeq \OO_{X_{\alpha}}$. 
This gives $\Psi_{\widetilde{\E'}}\circ \Phi_{\widetilde{\E}} \simeq g_*$.
Similarly, we can show that $\Phi_{\widetilde{\E}}\circ \Psi_{\widetilde{\E'}}\simeq h_*$ 
for some $h\in G$, and hence that 
$g^* \circ \Psi_{\widetilde{\E'}}$, or equivalently $\Psi_{\widetilde{\E'}}\circ h^*$, 
is a quasi-inverse of $\Phi_{\widetilde{\E}}$. 
Finally, Remark on p. 297 implies that $\Phi_{\widetilde{\E}}$ is a lift of $\Phi_{\E}$.

The proofs of the fact that $\Phi_{\widetilde{\E}}$ is equivariant 
and of (ii) are now completely analogous to those of the corresponding statements in \cite[Theorem 4.5]{BM}.
\end{proof}

\section{Comparison of cohomological support loci}\label{sec comparison}

A more precise statement than that of Conjecture \ref{var1} naturally involves the dimension of  the
cohomology groups of the line bundles mapped to each other via the Rouquier isomorphism. 
Such a statement was proved in \cite{lombardi} when $i = 0, 1$. The concrete statement, which also 
contains Conjecture \ref{special_hodge} by specializing at the origin, is the following:

\begin{conj}\label{stronger}
Let $X$ and $Y$ be smooth projective varieties of dimension $n$, and let $\Phi_{\E}$ be 
a Fourier-Mukai equivalence between $\D(X)$ and $\D(Y)$.
If $F = F_{\E}$ is the induced Rouquier isomorphism, then 
$$F \left(\id_X, V^i (\omega_X)_0 \right) = \left(\id_Y, V^i (\omega_Y)_0\right)$$
for all $i$, so that $V^i (\omega_X)_0 \simeq V^i (\omega_Y)_0$. 
Moreover if $\alpha \in V^i (\omega_X)_0$ and $F(\id_X,  \alpha) = (\id_Y, \beta)$, then
$$h^i (X, \omega_X \otimes \alpha)  = h^i (Y, \omega_Y \otimes \beta).$$
\end{conj}

In order to address this statement, we consider for each $m \ge 1$ the more refined cohomological support 
loci 
$$V^i_m( \omega_X) : = \{ \alpha \in \Pic^0 (X)~|~ h^i (X, \omega_X \otimes \alpha) \ge m\}.$$
In this notation $V^i (\omega_X)$ becomes  $V^i_1 (\omega_X)$.
The following result is a strengthening of Theorem \ref{main} in the Introduction.\footnote{We are grateful to the 
referee, who suggested that our original argument for $m = 1$ applies in fact to all $m$. This was crucial for deducing 
this more refined statement, as opposed to just that of Conjecture \ref{var1}, from
Conjecture \ref{special_hodge}.}

\begin{theorem}\label{main_improved}
Conjecture \ref{special_hodge} is equivalent to Conjecture \ref{stronger}. 
Specifically, if $n = {\rm dim}~X$, Conjecture \ref{special_hodge} for $n-i$ implies 
$$F \left(\id_X, V^i_m (\omega_X)_0 \right) = \left(\id_Y, V^i_m (\omega_Y)_0\right)$$
for all $m \ge 1$.
\end{theorem}
\begin{proof}
Note first that $F$ induces an isomorphism on the locus of line bundles $\alpha \in \Pic^0 (X)$ with the 
property that $F (\id_X, \alpha) = (\id_Y, \beta)$ for some $\beta \in \Pic^0 (Y)$, so indeed the first 
assertion in the statement follows from the second one, which we prove in a few steps.

\noindent
\emph{Step 1.}
We first show that if $\alpha \in V^i (\omega_X)_0$, then it does satisfy the property above, namely
there exists $\beta \in \Pic^0 (Y)$ such that 
$$F (\id_X, \alpha) = (\id_Y, \beta).$$
A more general statement has already been proved in \cite[Theorem 3.2]{lombardi}. We  
extract the argument we need here in order to keep the proof self-contained, following \cite[\S3]{PS} as well. 
The Rouquier isomorphism $F$ induces a morphism
$$\pi \colon \Pic^0(Y) \to \Aut^0 (X), \quad \pi(\beta)= p_1 \bigl( F^{-1}(\id_Y, \beta) \bigr),$$
whose image is an abelian variety $A$ and where $p_1$ is the projection from ${\rm Aut}^0(X)\times {\rm Pic}^0(X)$ onto the 
first factor. 
If $A$ is trivial there is nothing to prove, so we can assume that $A$ is 
positive dimensional.

As $A$ is an abelian variety of automorphisms of $X$, according to \cite[\S3]{brion} 
there exists a finite subgroup
$H \subset A$ and an \'etale locally trivial fibration
$p\colon X \rightarrow A/H$ which is trivialized by base change to $A$.
In other words, there is  a cartesian diagram
$$\xymatrix{
A \ar[r]^{\,\,\,\, g}  \ar[d] \times Z & X \ar[d]^{p \,\,\,\,\,\,\,\,}\\
A \ar[r] & A/H
}
$$
where $Z = p^{-1}(0)$. Restricting $g$ to the fiber $A\times \{z_0\}$ where $z_0$ 
is an arbitrary point in $Z$, we obtain a morphism $f \colon  A \rightarrow X$, which is 
in fact an orbit of the action of $A$ on $X$. It is shown in the proof of \cite[Theorem A]{PS}, that 
${\rm Ker}(f^*)^0\simeq {\rm Ker}(\pi)^0$, where $(\cdot)^0$ denotes the connected component of the identity; this is based on 
a theorem of Matsumura-Nishi, essentially saying that the induced $f^*\colon  \Pic^0 (X) \rightarrow \Pic^0 (A)$ is surjective.
Consequently, we only need to show that $\alpha \in {\rm Ker}(f^*)$; it will then automatically be in ${\rm Ker}(f^*)^0$, 
since it lives in $V^i (\omega_X)_0$, which is a union of abelian subvarieties.

To this end, note that $\alpha \in V^i (\omega_X)_0$ implies that 
$$H^i \left(A \times Z, g^*(\omega_X\otimes \alpha)\right) \simeq 
H^i \left(A \times Z, f^* \alpha \boxtimes (\omega_Z \otimes \alpha_{|Z})\right) \neq 0.$$
Applying the K\"unneth formula, we conclude that we must have
$$H^k (A, f^* \alpha) \neq 0\,\,\,\,{\rm for~some~}  0 \le k \le i,$$
which implies that $f^* \alpha \simeq \OO_A$.

\noindent
\emph{Step 2.}
Since one can repeat the argument in Step $1$ for $F^{-1}$, it is then enough to show that 
if $\alpha \in V^i_m (\omega_X)_0$ and $F(\id_X, \alpha) = (\id_Y , \beta)$, then 
$\beta\in V^i_m (\omega_Y)_0$ as well.
In this step we show that it is enough to prove this assertion in the case when $\alpha\in \Pic^0 (X)$ 
is a torsion point of (special) prime order. 
First, since $F$ is a group isomorphism, $\alpha$ is torsion of some order if and only if $\beta$ is torsion of the same order.

According to a well-known theorem of Simpson \cite{simpson}, every irreducible component $Z$ of $V^i_m (\omega_Y)$ is a torsion translate 
$\tau_Z + A_Z$ of an abelian subvariety of $\Pic^0 (Y)$. We consider the set $P_i$ of all prime 
numbers that do not divide ${\rm ord}(\tau_Z)$ for any such component $Z$. As 
$V^i_m (\omega_Y)$ is an algebraic set by the semicontinuity theorem, 
we are only throwing away a finite set of primes.
We will show that it is enough to prove the assertion above when $\alpha$ is torsion 
with order in $P_i$.
First note that it is a standard fact that torsion points of prime order are Zariski dense in a 
complex abelian variety.\footnote{This follows for instance 
from the fact that real numbers can be approximated with rational numbers with prime denominators.} Consequently, 
torsion points with order in the set $P_i$ are dense  as well.

Let now $W$ be a component of $V^i_m (\omega_X)_0$. It suffices to show that 
$$Z : = p_2 \left( F(\id_X, W)\right) \subset V^i_m (\omega_Y)_0,$$
where $p_2$ is the projection onto the second component of $\Aut^0 (Y) \times \Pic^0 (Y)$. 
Indeed, since one can repeat the same argument for the inverse homomorphism $F^{-1}$, 
this implies that $Z$ has to be a component of $V^i_m (\omega_Y)_0$, isomorphic to $W$ via 
$F$. Now $Z$ is an abelian variety, and therefore by the discussion above torsion points $\beta$ of order in $P_i$ are 
dense in $Z$. By semicontinuity, it suffices to show that $\beta \in V^i_m (\omega_Y)_0$.
These $\beta$'s are precisely the images of $\alpha\in W$ of order in $P_i$, which concludes our reduction step.

\noindent
\emph{Step 3.}
Let now $\alpha \in V^i_m (\omega_X)_0$ be a torsion point of order belonging to the set $P_i$, and 
$F(\id_X, \alpha) = (\id_Y, \beta)$. Denote 
$$p = {\rm ord}(\alpha) = {\rm ord}(\beta).$$
Consider the cyclic covers $\pi_{\alpha} \colon X_{\alpha}\rightarrow X$ and $\pi_{\beta} \colon Y_{\beta}\rightarrow Y$ 
associated to $\alpha$ and $\beta$ respectively. We can apply Theorem \ref{cyclic} to conclude that 
there exists a Fourier-Mukai equivalence
$$\Phi_{\widetilde \E} \colon \D(X_\alpha) \rightarrow \D(Y_\beta)$$
lifting $\Phi_\E$. Assuming Conjecture \ref{special_hodge}, we have in particular that
$$h^{0, n-i} (X) = h^{0, n-i} (Y) \,\,\,\,{\rm and} \,\,\,\, h^{0, n-i} (X_\alpha) = h^{0, n-i} (Y_\beta).$$ 
On the other hand, using (\ref{projection}), we have
$$H^{n-i} (X_\alpha, \OO_{X_{\alpha}}) \simeq \bigoplus_{j=0}^{p-1}H^{n-i} (X, \alpha^{-j})
\,\,\,\,{\rm and}$$ $$H^{n-i} (Y_\beta, \OO_{Y_{\beta}}) \simeq \bigoplus_{j=0}^{p-1}H^{n-i} (Y, \beta^{-j}).$$
The terms on the left hand side and the terms corresponding to $j = 0$ on the 
right hand side have the same dimension. 
On the other hand, since every component of $V^i_m (\omega_X)_0$ is an abelian subvariety of $\Pic^0 (X)$, 
we have that $\alpha^j \in V^i_m (\omega_X)_0$ for all $j$, so  
$$h^{n-i} (X, \alpha^{-j}) \ge m \,\,\,\, {\rm for~ all~} j.$$
We conclude that 
$$h^{n-i} (Y, \beta^{-k}) \ge m \,\,\,\,{\rm for~some} \,\,\,\, 1 \le k \le p-1.$$
This says that $\beta^k \in V^i_m (\omega_Y)$. 
We claim that in fact $\beta^k \in V^i_m (\omega_Y)_0$. 
Assuming that this is the case, we can conclude 
the argument. Indeed, pick a component $T \subset V^i_m (\omega_Y)_0$ such that $\beta^k \in T$.
But $\beta^k$ generates the cyclic group of prime order $\{1, \beta, \ldots, \beta^{p-1}\}$, so $\beta \in T$ as well, 
since $T$ is an abelian variety.

We are left with proving that $\beta^k \in V^i_m (\omega_Y)_0$. Pick any component $S$ in $V^i_m (\omega_Y)$
containing $\beta^k$. By the Simpson theorem mentioned above, 
we have that $S = \tau + B$, where $\tau$ is a torsion point and $B$ is an abelian 
subvariety of $\Pic^0 (Y)$. We claim that we must have $\tau \in B$, so that $S = B$, confirming our 
statement.\footnote{Note that in fact we are proving 
something stronger: $\beta^k$ belongs \emph{only} to components of $V^i_m (\omega_Y)$ 
passing through the origin.} To this end, switching abusively to additive notation, 
say $k \beta = \tau + b$ with $b \in B$, and denote the torsion order of $\tau$ by $r$. 
Since the order $p$ of $\beta$ is assumed to be in the set $P_i$, we have that $r$ and $p$ are coprime. 
Now on one hand $r\tau = 0 \in B$,  while on the other hand  $p\tau + pb = kp\beta = 0$, so $p\tau \in B$
as well. Since $r$ and $p$ are coprime, one easily concludes that $\tau \in B$.
\end{proof}

\section{Fibrations over curves}\label{sec fibrations}

\noindent
{\bf Fibration structure via derived equivalence.}
We now apply the derived invariance of $V^{n-1}(\omega_X)_0$ to deduce Theorem 
\ref{pencils} (i) in the Introduction.

\begin{proof}(of Theorem \ref{pencils} (i)).
Let $f \colon X \rightarrow C$ be a surjective morphism onto a smooth projective curve of genus $g \ge 2$. 
Using Stein factorization, we can assume that $f$ has connected fibers. 
We have that $f^* \Pic^0 (C) \subset V^{n-1} (\omega_X)_0$. Since by Corollary \ref{non-vanishing} we 
have $V^{n-1} (\omega_X)_0 \simeq V^{n-1} (\omega_Y)_0$, there exists a component $T$ of $V^{n-1} (\omega_Y)_0$ of dimension at 
least $g$. By \cite[Corollaire 2.3]{beauville}, there exists a smooth projective curve $D$ and a surjective morphism with 
connected fibers $g\colon Y \rightarrow D$ such that $T = g^* \Pic^0 (D)$. Note that $g(D) = {\rm dim}~T \ge g$.
\end{proof}

\emph{Remark.} 
The discussion above shows in fact the following more refined statement. For a smooth projective variety $Z$, define
$$A_Z : = \{ g \in \NN ~|~ g = {\rm dim}~T {\rm ~for~some~irreducible ~component}~ T \subset V^{n-1}(\omega_Z)_0\}.$$
Then if $\D(X) \simeq \D(Y)$, we have $A_X = A_Y$. Denoting this set by $A$, for each $g\in A$ both $X$ and $Y$ have 
surjective maps onto curves of genus $g$. The maximal genus of a curve admitting a surjective map from $X$ (or $Y$) is ${\rm max}(A)$.

\begin{question}\label{curves}
If $\D(X) \simeq \D(Y)$, is the set of curves of genus at least $2$ admitting non-constant 
maps from $X$ the same as that for $Y$? Or at least the set of curves corresponding to irreducible components of $V^{n-1}(\omega_X)_0$?
\end{question}

\noindent
{\bf Fano fibrations.}
The following is a slightly more precise version of Theorem \ref{pencils} (ii) in the Introduction.

\begin{theorem}\label{fano}
Let $X$ and $Y$ be smooth projective complex varieties such that $\D(X) \simeq \D(Y)$. 
Assume that there is an algebraic fiber space $f \colon X \rightarrow C$ such that $C$ is a smooth projective curve of genus at least $2$ and 
the general fiber of $f$ is Fano. Then:

\noindent
(i)  $X$ and $Y$ are $K$-equivalent.

\noindent
(ii) There is an algebraic fiber space $g\colon Y\rightarrow C$ such that for $c \in C$  where the fibers $X_c$ and $Y_c$ are 
smooth, with $X_c$ Fano, one has $Y_c \simeq X_c$.

\noindent
(iii) If $\omega_X^{-1}$ is $f$-ample (e.g. if $f$ is a Mori fiber space), then $X \simeq Y$.
\end{theorem}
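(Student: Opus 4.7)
The plan is to combine the derived invariance of $V^{n-1}(\omega_X)_0$ (Corollary \ref{non-vanishing}) with Kawamata's kernel technique \cite{kawamata} and the structural result of \cite{zhang} on the Albanese map of varieties with nef anticanonical bundle. First, Theorem \ref{pencils}(i) gives a surjective morphism $g: Y \rightarrow D$ onto a curve of genus $\geq g(C) \geq 2$. Following Step~1 of the proof of Theorem \ref{main_improved}, the Rouquier isomorphism $F_\E$ sends the component $f^*\Pic^0(C) \subset V^{n-1}(\omega_X)_0$ to a component of $V^{n-1}(\omega_Y)_0$, which by Beauville's theorem is of the form $g^*\Pic^0(D)$. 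This yields an isomorphism of polarized abelian varieties $J(C) \simeq J(D)$ (compatibly with the polarizations from the theta divisors), so Torelli gives $D \simeq C$, and we identify them.

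Next, since the general fiber of $f$ is Fano, hence rationally connected with $q = 0$, $\alb_X$ factors as $X \stackrel{f}{\rightarrow} C \hookrightarrow J(C)$ and $\Alb(X) = J(C)$. By the Popa--Schnell theorem, $\Alb(Y) = \Alb(X) = J(C)$, and analogously $\alb_Y$ factors through $g$. By Kawamata's kernel theorem \cite{kawamata}, every irreducible component of $\Supp(\E)$ has dimension $n$ and projects birationally onto both $X$ and $Y$. Choosing an $n$-dimensional component and passing to a smooth birational model yields birational morphisms $p: W \rightarrow X$ and $q: W \rightarrow Y$. Since both $f \circ p$ and $g \circ q$ compute the Albanese of $W$ after postcomposition with the Abel--Jacobi embedding $C \hookrightarrow J(C)$, and the latter is injective, $f \circ p = g \circ q$ as maps $W \rightarrow C$.

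To upgrade the correspondence to a $K$-equivalence for (i), write $\omega_X \simeq \omega_{X/C} \otimes f^*\omega_C$ and $\omega_Y \simeq \omega_{Y/C} \otimes g^*\omega_C$; using $f \circ p = g \circ q$, the required $p^*\omega_X \simeq q^*\omega_Y$ reduces to $p^*\omega_{X/C} \simeq q^*\omega_{Y/C}$. Since $-\omega_{X/C}$ is $f$-ample, one runs Kawamata's argument relatively over $C$: the fiberwise restriction of the Fourier--Mukai correspondence matches the Fano $X_c$ with $Y_c$ birationally, and Zhang's theorem \cite{zhang} ensures that the global Albanese structure on $W$ is compatible with this fiberwise picture, preventing spurious discrepancies in the relative canonical bundles. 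Part (ii) follows by smooth specialization together with the fact that $K$-equivalent smooth Fano varieties are isomorphic (their anticanonical models coincide). For (iii), when $-\omega_X$ is $f$-ample, $X \simeq \tn{Proj}_C\bigl(\bigoplus_{m\geq 0} f_*\omega_X^{-m}\bigr)$; by (i), the same description holds for $Y$, giving $X \simeq Y$ over $C$.

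The main obstacle is the $K$-equivalence step: since $\omega_X$ is not nef, Kawamata's original theorem does not apply directly, and one must adapt his argument relatively over $C$. The $f$-ampleness of $-\omega_{X/C}$ pins down the fiberwise behavior, while \cite{zhang} ensures that the Albanese factorization of $W$ does not introduce spurious discrepancies when comparing the relative canonical bundles on $W$.
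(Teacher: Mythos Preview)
Your proposal has the right cast of characters (Kawamata's kernel technique, Zhang's theorem, the Albanese factorization through $C$), but they are assembled in the wrong order, and the central step is assumed rather than proved.

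\textbf{The main gap is your step invoking ``Kawamata's kernel theorem''.} You assert that every irreducible component of $\Supp(\E)$ has dimension $n$ and projects birationally onto both factors. This is not a theorem; it is exactly what has to be proved here. What is known a priori (e.g.\ \cite{huybrechts} Corollary 6.5) is only that \emph{some} component $Z$ of $\Supp(\E)$ dominates $X$, and one could have $\dim Z > n$. Kawamata's argument (Lemma \ref{K_equiv} in the paper) yields $K$-equivalence \emph{once} one knows $\dim Z = n$; the whole content of part (i) is to rule out $\dim Z > n$. The paper does this by contradiction: if $\dim Z > n$, then restricting over a general Fano fiber $F$ of $f$ one gets $Z_F \subset Z$ with $\dim Z_F \ge n$, and since $\omega_F^{-1}$ is ample the projection $q_F: Z_F \to Y$ is finite (\cite{huybrechts} Corollary 6.8), hence surjective. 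Pulling back the relation $p_F^*\omega_X^{r} \simeq q_F^*\omega_Y^{r}$ then forces $\omega_Y^{-1}$, and therefore $\omega_X^{-1}$, to be nef. Now Zhang's theorem \cite{zhang} says the Albanese map of $X$ is surjective; but since the fibers of $f$ are Fano, $\alb_X$ factors through $C \hookrightarrow J(C)$, which is not surjective when $g(C)\ge 2$. This is where Zhang is actually used --- to derive a contradiction --- not, as in your sketch, to ``prevent spurious discrepancies'' in relative canonical bundles.

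\textbf{Secondary issues.} Your Torelli step is both unjustified and unnecessary. The Rouquier isomorphism gives an isomorphism of abelian varieties $f^*\Pic^0(C) \simeq g^*\Pic^0(D)$, but you give no reason why it should respect the principal polarizations, so you cannot invoke Torelli. The paper bypasses this entirely: once (i) gives that $X$ and $Y$ are $K$-equivalent (hence birational), they share the same Albanese variety and Albanese image, so $g: Y \to C$ is simply the Albanese map of $Y$. Also, in arguing for (i) you write ``since $-\omega_{X/C}$ is $f$-ample'', but that is the hypothesis of (iii), not (i); for (i) one only knows the general fiber is Fano. Finally, your approach to (iii) via $\tn{Proj}_C$ of the relative anticanonical algebra would require showing that $K$-equivalence identifies these algebras over $C$; the paper instead argues directly that any curve in $Z$ contracted by $q$ lies in a fiber and, by $f$-ampleness of $\omega_X^{-1}$, must also be contracted by $p$, so the birational map $Y \dashrightarrow X$ is a morphism, hence an isomorphism.
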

\begin{proof}
Let $p$ and $q$ be the projections of $X\times Y$ onto the first and second factor respectively. 
Consider the unique up to isomorphism $\E\in \D(X\times Y)$ such that the given equivalence is the Fourier-Mukai functor $\Phi_{\E}$. 
Then by \cite[Corollary 6.5]{huybrechts}, there 
exists a component $Z$ of ${\rm Supp} (\E)$ such that $p_{|Z} \colon Z \rightarrow X$ is surjective. We first claim that 
${\rm dim}~Z = {\rm dim}~X$.

Assuming by contradiction that ${\rm dim}~Z > {\rm dim}~X$, we show that $\omega_X^{-1}$ is nef. 
We denote by $F$ the general fiber of $f$, which is Fano. We also define
$Z_F : = ~p_{Z}^{-1} (F) \subset Z$, while $q_F \colon Z_F \rightarrow Y$ is the projection obtained by restricting $q$ to $Z_F$. 
Since $\omega_F^{-1}$ is ample, we obtain that $q_F$ is finite onto its 
image; see \cite[Corollary 6.8]{huybrechts}. 
On the other hand, the assumption that ${\rm dim}~Z > {\rm dim}~X$ implies that ${\rm dim}~Z_F \ge {\rm dim}~X = \dim Y$, so 
$q_F$ must be surjective (and consequently ${\rm dim}~Z_F = {\rm dim}~X$). 

By passing to its normalization if necessary, we can assume without loss of generality that $Z_F$ is normal. Denoting 
by $p_F$ the projection of $Z_F$ to $X$, by \cite[Corollary 6.9]{huybrechts} we have that there exists $r > 0$ such that 
$$p_F^* ~\omega_X^{-r} \simeq q_F^*~ \omega_Y^{-r}.$$
Now since $p_F$ factors through $F$ and $\omega_F^{-1}$ is ample, we have 
that $p_F^* \omega_X^{-1}$ is nef, hence by the isomorphism above so is $q_F^*~ \omega_Y^{-1}$. Finally, since $q_F$ is finite and 
surjective, we obtain that $\omega_Y^{-1}$ is nef, so by  
\cite[Theorem 1.4]{kawamata}, $\omega_X^{-1}$ is nef as well.

We can now conclude the proof of the claim using the main result of Zhang 
\cite{zhang} (part of 
a conjecture of Demailly-Peternell-Schneider), 
saying that a smooth projective variety with nef anticanonical bundle
has surjective Albanese map. In our case, since the general fiber of $f$ is Fano, the Albanese map of $X$ is obtained
by composing $f$ with the Abel-Jacobi embedding of $C$. But this implies that $C$ has genus at most $1$, a contradiction.
The claim is proved, so
$$\dim Z = \dim X = \dim Y.$$ 
At this stage, the $K$-equivalence statement follows from Lemma \ref{K_equiv} below.

For statements (ii) and (iii) we emphasize that, once we know that $X$ and $Y$ are $K$-equivalent, the argument is standard and 
independent of derived equivalence.\footnote{We thank Alessio Corti for pointing this out to us.} Note first that smooth birational 
varieties have the same Albanese variety and Albanese image. Since $f$ is the Albanese map of $X$, it follows that the Albanese map 
of $Y$ is a surjective morphism $g\colon  Y \rightarrow C$. 
Furthermore, $C$ is the Albanese image of any other birational model as well, hence any smooth model $Z$ inducing a $K$-equivalence 
between $X$ and $Y$ sits in a commutative diagram
$$\xymatrix{
& Z \ar[dl]_p \ar[dr]^q \ar[dd]_{h} & \\
X \ar[dr]^{f } &  & Y \ar[dl]_{ g}    \\
& C & 
}
$$
Note that in particular $g$ has connected fibers since $f$ does.

For a point $c \in C$, denote by $X_c$, $Y_c$ and $Z_c$ the fibers of $f$, $g$ and $h$ over $c$. 
By adjunction, $Z_c$ realizes a $K$-equivalence between $X_c$ and $Y_c$. 
First, assuming that $c$ is chosen such that $X_c$ and $Y_c$ are smooth, with $X_c$ Fano,  
we show that $X_c \simeq Y_c$. 

To this end, if we assume that the induced rational map $\varphi_c \colon Y_c \rightarrow X_c$ is not a morphism, there must 
be a curve $B \subset Z_c$ which is  contracted by $q_c$ but not by $p_c$.  
Then $q_c^* \omega_{Y_c} \cdot B = 0$, and so $p_c^* \omega_{X_c} \cdot B = 0$ as well. 
On the other hand, $\omega_{X_c}^{-1} \cdot p_c(B) < 0$ which is a contradiction. Therefore 
we obtain that $\varphi_c$ is a birational morphism with the property that $\varphi_c^* \omega_{X_c} \simeq \omega_{Y_c}$, 
which implies that $\varphi_c$ is an isomorphism.

If in fact $\omega_X^{-1}$ is $f$-ample, this argument can be globalized: indeed, assuming that the rational map 
$\varphi \colon Y \rightarrow X$ is not a morphism, there exists a curve $B \subset ~ Z$ which is contracted by $q$ and hence $h$, 
 but not by $p$. Since $B$ lives in a fiber of $f$ (by the commutativity of the diagram), we again obtain a contradiction. 
 Once we know that $\varphi$ is a morphism, the same argument as above implies that it is an isomorphism.
\end{proof}

The following Lemma used in the proof above is due to Kawamata, and can be extracted from his argument 
leading to the fact that derived equivalent varieties of general type are $K$-equivalent \cite{kawamata}; we sketch the argument for convenience.

\begin{lemma}\label{K_equiv}
Let $\Phi_{\E} \colon  \D(X) \rightarrow \D(Y)$ be a derived equivalence, and 
assume that there exists a component $Z$ of the support
of $\E$ such that $\dim Z = \dim X$ and $Z$ dominates $X$. Then $X$ and $Y$ are $K$-equivalent.
\end{lemma}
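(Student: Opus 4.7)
The plan is to reconstruct Kawamata's argument from \cite{kawamata}: use the component $Z$ to produce a common smooth projective model of $X$ and $Y$, and then exploit the Fourier--Mukai formalism to identify the pullbacks of the canonical bundles on this model.

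First I would verify that both projections $p := p_X|_Z : Z \to X$ and $q := p_Y|_Z : Z \to Y$ are surjective and birational. Surjectivity of $p$ is the hypothesis; surjectivity of $q$ follows by applying \cite{huybrechts} Corollary 6.5 to the quasi-inverse equivalence and using $\dim Z = \dim X = \dim Y$ to identify the dominating component with $Z$. Birationality then comes from the fact that $\E$ is generically, along $Z$, a shift of a line bundle --- a structural property of top-dimensional components of the support of the Fourier--Mukai kernel of an equivalence, proved by evaluating $\Phi_\E$ on skyscrapers at generic points of $X$ and $Y$.

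Next I would pass to a resolution $\mu : W \to Z^{\nu}$ of the normalization of $Z$ and set $\tilde p = p\circ \mu$, $\tilde q = q\circ \mu$. The core step is to establish
\[
\tilde p^{\,*}\, \omega_X \;\simeq\; \tilde q^{\,*}\, \omega_Y
\]
on $W$. This is extracted from the Fourier--Mukai formalism: by Grothendieck--Serre duality on $X \times Y$, the kernel of the quasi-inverse of $\Phi_\E$ is (up to shift) $\E^{\vee} \otimes p_Y^{\,*} \omega_Y$, and unpacking the composition $\Phi_{\E}^{-1}\circ \Phi_{\E} \simeq \tn{id}$ along the generic point of $Z$ yields a relation between $p^{\,*}\omega_X$ and $q^{\,*}\omega_Y$ on $Z$ which, after pullback to the smooth resolution $W$, becomes the desired isomorphism of line bundles.

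Once these two steps are in place, the smooth projective variety $W$ together with the birational morphisms $\tilde p, \tilde q$ and the isomorphism $\tilde p^{\,*}\omega_X \simeq \tilde q^{\,*}\omega_Y$ realizes the required $K$-equivalence between $X$ and $Y$. The main obstacle is the second step --- translating the abstract isomorphism $\Phi_{\E}^{-1}\circ \Phi_\E \simeq \tn{id}$ into a concrete identity of canonical bundle pullbacks --- which is precisely Kawamata's kernel technique from \cite{kawamata}, and where the hypothesis that $\Phi_\E$ is an equivalence (rather than merely fully faithful) enters essentially.
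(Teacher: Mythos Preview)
Your overall plan is Kawamata's argument and matches the paper's approach, but there is a genuine gap in your justification that $q = p_Y|_Z$ is surjective. Applying \cite{huybrechts} Corollary 6.5 to the quasi-inverse only produces \emph{some} component $Z'$ of $\Supp(\E)$ dominating $Y$; the equality $\dim Z = \dim X = \dim Y$ does not by itself force $Z' = Z$. A priori $Z'$ could be a different component, one that does not dominate $X$, and nothing you have written rules this out.

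The paper closes this gap by a direct contradiction argument that you should use instead. Once you know (from \cite{huybrechts} Corollary 6.12) that $p|_Z$ is birational and that $Z$ is the \emph{unique} component of $\Supp(\E)$ dominating $X$, suppose $q|_Z$ is not surjective. Then one can choose general points $x_1 \ne x_2 \in X$ with $p^{-1}(x_i)\cap\Supp(\E)$ a single point of $Z$ and $q(p^{-1}(x_1)) = q(p^{-1}(x_2)) = y$. Hence $\Phi_\E(\OO_{x_1})$ and $\Phi_\E(\OO_{x_2})$ are both supported at $y$, which forces
\[
{\rm Hom}^\bullet_{\D(X)}(\OO_{x_1},\OO_{x_2}) \simeq {\rm Hom}^\bullet_{\D(Y)}\bigl(\Phi_\E(\OO_{x_1}),\Phi_\E(\OO_{x_2})\bigr) \ne 0,
\]
a contradiction. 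With surjectivity of $q$ in hand, the rest of your sketch (birationality of both projections, the canonical-bundle identity on a smooth model via the kernel of the quasi-inverse, and the conclusion of $K$-equivalence as in \cite{kawamata} Theorem 2.3) is correct and is exactly what the paper does, except that the paper only records $p^*\omega_X^r \simeq q^*\omega_Y^r$ for some $r\ge 1$ via \cite{huybrechts} Corollary 6.9, which is already enough.
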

\begin{proof}
Denote by $p$ and $q$ the projections of ${Z}$ to $X$ and $Y$.
Since $p$ is surjective, \cite[Corollary 6.12]{huybrechts} tells us that $p$ is birational, and 
$Z$ is the unique component of ${\rm Supp}(\E)$ dominating $X$. 
We claim that $q$ is also surjective, in which case by the same reasoning $q$ is birational as well. 
Since (on the normalization of $Z$) we have $p^* \omega_X^r \simeq q^* \omega_Y^r$ for some $r\ge 1$, this 
suffices to conclude that $X$ and $Y$ are $K$-equivalent as in \cite[Theorem 2.3]{kawamata} (see also \cite[p.149]{huybrechts}).

Assuming that $q$ is not surjective, we can find general points $x_1$ and $x_2$ in $X$ such that $p^{-1} (x_1)$ 
and $p^{-1} (x_2)$ consist of one point, and 
$q (p^{-1} (x_1)) = q (p^{-1} (x_2)) = y$ for some $y \in Y$. One then sees that 
$${\rm Supp} ~\Phi_{\E} (\OO_{x_1}) = {\rm Supp} ~\Phi_{\E} (\OO_{x_2}) = \{y\}.$$
This implies in standard fashion that 
$${\rm Hom}^{\bullet}_{\D(X)} (\OO_{x_1}, \OO_{x_2}) \simeq {\rm Hom}^{\bullet}_{\D(Y)} 
(\Phi_{\E} (\OO_{x_1}), \Phi_{\E} (\OO_{x_2})) \neq 0,$$
a contradiction.
\end{proof}

\section*{Acknowledgments}
We thank Yujiro Kawamata and Anatoly Libgober for discussions that motivated 
this work, and Caucher Birkar, Alessio Corti, Lawrence Ein, Anne-Sophie Kaloghiros, Artie Prendergast-Smith and 
Christian Schnell for answering numerous questions. 
We are grateful to the referee, who suggested how to improve the statement of Theorem \ref{main} in the form
of Theorem \ref{main_improved}.
Special thanks go to Rob Lazarsfeld, our first and second generation teacher, whose fundamental 
results in the study of cohomological support loci have provided the original inspiration for this line of research.
MP was partially supported by the NSF grant DMS-1101323.

\bibliographystyle{amsalpha}

\end{document}